\newtheorem{theorem}{Theorem}
\theoremstyle{plain}
\newtheorem{corollary}{Corollary}
\newtheorem{lemma}{Lemma}
\numberwithin{equation}{section}
\begin{document}
\title[Sharp bounds for Neuman-S\'{a}ndor mean]{Sharp power means bounds for
Neuman-S\'{a}ndor mean}
\author{Zhen-Hang Yang}
\address{System Division, Zhejiang Province Electric Power Test and Research
Institute, Hangzhou, Zhejiang, China, 31001}
\email{yzhkm@163.com}
\date{July 5, 2012}
\subjclass[2010]{Primary 26E60}
\keywords{Neuman-Sandor mean, inequality}
\thanks{This paper is in final form and no version of it will be submitted
for publication elsewhere.}

\begin{abstract}
For $a,b>0$ with $a\neq b$, let $N\left( a,b\right) $ denote the Neuman-S%
\'{a}ndor mean defined by 
\begin{equation*}
N\left( a,b\right) =\frac{a-b}{2\func{arcsinh}\frac{a-b}{a+b}}
\end{equation*}%
and $A_{r}\left( a,b\right) $ denote the $r$-order power mean. We present
the sharp power means bounds for the Neuman-S\'{a}ndor mean: 
\begin{equation*}
A_{p_{1}}\left( a,b\right) <N\left( a,b\right) \leq A_{p_{2}}\left(
a,b\right) ,
\end{equation*}%
where $p_{1}=$ $\frac{\ln 2}{\ln \ln \left( 3+2\sqrt{2}\right) }$ and $%
p_{2}=4/3$ are the best constants.
\end{abstract}

\maketitle

\section{Introduction}

Throughout the paper, we assume that $a,b>0$ with $a\neq b$. The classical
power mean of order $r$ of the positive real numbers $a$ and $b$ is defined
by

\begin{equation*}
A_{r}=A_{r}(a,b)=\left( \frac{a^{r}+b^{r}}{2}\right) ^{1/r}\text{ if }r\neq 0%
\text{ and }A_{0}=A_{0}(a,b)=\sqrt{ab}.
\end{equation*}%
It is well-known that the function $r\mapsto A_{r}(a,b)$ is continuous and
strictly increasing on $\mathbb{R}$ (see \cite{Bullen.1988}). As special
cases, the arithmetic mean, geometric mean and quadratic mean are $A=A\left(
a,b\right) =A_{1}\left( a,b\right) $, $G=G\left( a,b\right) =A_{0}\left(
a,b\right) $ and $Q=Q\left( a,b\right) =A_{2}\left( a,b\right) $,
respectively.

The logarithmic mean and identrice (exponential) mean are defined as%
\begin{eqnarray*}
L &=&L\left( a,b\right) =\frac{a-b}{\ln a-\ln b}, \\
I &=&I\left( a,b\right) =e^{-1}\left( a^{a}/b^{b}\right) ^{1/\left(
a-b\right) },
\end{eqnarray*}%
respectively. In 1993, Seiffert \cite{Seiffert.4(11)(1993)} introduced his
first mean as 
\begin{equation}
P=P\left( a,b\right) =\frac{a-b}{4\arctan \sqrt{a/b}-\pi },  \label{P1}
\end{equation}%
which can be written also in the equivalent form 
\begin{equation}
P=P\left( a,b\right) =\frac{a-b}{2\arcsin \frac{a-b}{a+b}},  \label{P2}
\end{equation}%
see e.g. \cite{Seiffert.42(1987)}. In 1995, Seiffert \cite{Seiffert.29(1995)}
defined his second mean as%
\begin{equation*}
T=T\left( a,b\right) =\frac{a-b}{2\arctan \frac{a-b}{a+b}}.
\end{equation*}%
Recently, Neuman and S\'{a}ndor have defined in \cite{Neuman.17(1)(2006)} a
new mean%
\begin{equation}
N=N\left( a,b\right) =\frac{a-b}{2\func{arcsinh}\frac{a-b}{a+b}}=\frac{a-b}{%
2\ln \frac{a-b+\sqrt{2\left( a^{2}+b^{2}\right) }}{a+b}}  \label{N-S mean}
\end{equation}

All these means are symmetric and homogeneous, and the power mean is
relatively simple. Hence ones are interested in evaluating these means by
power means $A_{p}$.

Ostle and Terwilliger \cite{Ostle.17.1957} and Karamata \cite%
{Karamata.24.1960} first proved that 
\begin{equation}
G<L<A.  \label{O-T-K}
\end{equation}%
This result, or a part of it, has been rediscovered and reproved many times
(see e. g., \cite{Mitrinovic.1970}, \cite{Yang. 1984}, \cite{Yang.4.1987}, 
\cite{Sandor.43.1988}). In 1974 Lin \cite{Lin.79.1972} obtained an important
refinement of the above inequalities:%
\begin{equation}
G<L<A_{1/3},  \label{Lin}
\end{equation}%
and proved that the number $1/3$ cannot be replaced by a smaller one.

For the identric mean $I$, Stolarsky \cite{Stolarsky.48.1975} first proved
that 
\begin{equation}
G<I<A  \label{Stolarsky 1}
\end{equation}%
(also see \cite{Yang. 1984}, \cite{Yang.4.1987}). In 1988, Alzer \cite%
{Alzer.43.1988} showed that 
\begin{equation}
2e^{-1}A<I<A  \label{Alzer}
\end{equation}%
(also see \cite{Sandor.189(1995)}). The following double inequality 
\begin{equation}
A_{1/2}<I<4e^{-1}A_{1/2}  \label{N-E 1}
\end{equation}%
is due to Neuman and S\'{a}ndor \cite{Neuman.16(2003)}. Stolarsky \cite%
{Stolarsky.87.1980} and Pittenger \cite{Pittinger.680.1980} established the
sharp lower and upper bounds for $I$ in terms of power means 
\begin{equation}
A_{2/3}<I<A_{\ln 2},  \label{S-P}
\end{equation}%
respectively. By using the well properties of homogeneous functions, Yang
also proved (\ref{Alzer}), (\ref{N-E 1}) in \cite{Yang.6(4).101.2005} and 
\begin{equation}
A_{2/3}<I<2\sqrt{2}e^{-1}A_{2/3}  \label{S-Y}
\end{equation}%
in \cite{Yang.10(3).2007}.

For the first Seiffert mean $P$, the author \cite{Seiffert.4(11)(1993)} gave
a estimate by $A$%
\begin{equation}
\frac{2}{\pi }A<P<A.  \label{Seiffert 1}
\end{equation}%
Subsequently, Jagers \cite{Jagers.12(1994)} proved that%
\begin{equation}
A_{1/2}<P<A_{2/3}.  \label{Jagers}
\end{equation}%
By using Pfaff's algorithm S\'{a}dor in \cite{Sandor.76(2001)} reproved the
first inequality in (\ref{Jagers}), while H\"{a}sto \cite{Hasto.3(5)(2002)}
gave a companion one of the second one in (\ref{Jagers}): 
\begin{equation}
\frac{2\sqrt{2}}{\pi }A_{2/3}<P<A_{2/3},  \label{Hasto 1}
\end{equation}%
Two year later, H\"{a}sto \cite{Hasto.7(1)(2004)} obtained further a sharp
lower bound for $P$: 
\begin{equation}
P>A_{\ln _{\pi }2}.  \label{Hasto 2}
\end{equation}

In 1995, Seiffert \cite{Seiffert.29(1995)} showed that 
\begin{equation}
A<T<A_{2}.  \label{Seiffert 2}
\end{equation}%
Very recently, Yang \cite{Yang.arxiv.1206.5494V1} present the sharp bounds
for the second Seiffert mean in terms of power means: 
\begin{equation}
A_{\log _{\pi /2}2}<T\leq A_{5/3}.  \label{Yang 1}
\end{equation}%
Moreover, he obtained that 
\begin{eqnarray}
\alpha A_{5/3} &<&T<A_{5/3},  \label{Yang 2} \\
A_{\log _{\pi /2}2} &<&T<\beta A_{\log _{\pi /2}2},  \label{Yang 3}
\end{eqnarray}%
where $\alpha =2^{8/5}\pi ^{-1}=\allowbreak 0.964\,94...$ and $\beta
=1.5349...$ are the best possible constants.

Concerning the Neuman-S\'{a}ndor mean, the author \cite{Neuman.17(1)(2006)}
first established%
\begin{equation}
G<L<P<A<N<T<A_{2}  \label{E-S 1}
\end{equation}%
and 
\begin{equation}
\frac{\pi }{2}P>A>\func{arcsinh}\left( 1\right) N>\frac{\pi }{2}T.
\label{E-S 2}
\end{equation}%
Lately, Constin and Toader \cite[Theorem 1]{Costin.IJMMS.2012.inprint} have
shown that $A_{3/2}$ can be put between $N$ and $T$, that is, 
\begin{equation}
N<A_{3/2}<T,  \label{C-S 1}
\end{equation}%
and they obtained the following nice chain of inequalities for certain means:%
\begin{equation}
G<L<A_{1/2}<P<A<N<A_{3/2}<T<A_{2}.  \label{C-S 2}
\end{equation}

Our aim is to prove that%
\begin{equation}
A_{\frac{\ln 2}{\ln \ln \left( 3+2\sqrt{2}\right) }}<N<A_{4/3},  \label{Main}
\end{equation}%
where $\frac{\ln 2}{\ln \ln \left( 3+2\sqrt{2}\right) }$ and $4/3$ are the
best possible constants. Thus, we obtain a more nice chain of inequalities
for bivariate means: 
\begin{eqnarray*}
A_{0} &<&L<A_{1/3}<A_{\ln _{\pi }2}<P<A_{2/3}<I<A_{\ln 2} \\
&<&A_{\frac{\ln 2}{\ln \ln \left( 3+2\sqrt{2}\right) }}<N<A_{4/3}<A_{\log
_{\pi /2}2}<T<A_{5/3}
\end{eqnarray*}

Our main results are the following

\begin{theorem}
\label{Theorem 1}For $a,b>0$ with $a\neq b$, the inequality $N<A_{p}$ holds
if and only if $p\geq 4/3$. Moreover, we have 
\begin{equation}
\allowbreak \alpha _{1}A_{4/3}<N<\beta _{1}A_{4/3},  \label{Ma}
\end{equation}%
where $\alpha _{1}=\tfrac{1}{\sqrt[4]{2}\ln \left( \sqrt{2}+1\right) }%
=\allowbreak 0.954\,07...$ and $\beta _{1}=1$ are the best possible
constants.
\end{theorem}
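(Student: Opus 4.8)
The plan is to reduce everything to a single-variable inequality by homogeneity. Since both $N$ and $A_{4/3}$ are homogeneous of degree one and symmetric, we may normalize and set $t = \frac{a-b}{a+b} \in (0,1)$ (assuming $a>b$ without loss of generality), so that $a = 1+t$, $b = 1-t$. Then $N(a,b) = \frac{t}{\operatorname{arcsinh} t}$ and $A_{4/3}(a,b) = \left(\frac{(1+t)^{4/3}+(1-t)^{4/3}}{2}\right)^{3/4}$. The inequality $N < A_{4/3}$ becomes, after raising to the power $4/3$ and rearranging,
\begin{equation*}
f(t) := \left(\frac{t}{\operatorname{arcsinh} t}\right)^{4/3} < \frac{(1+t)^{4/3}+(1-t)^{4/3}}{2} =: g(t), \qquad t\in(0,1).
\end{equation*}
It is cleaner to work with the ratio or, better, with the function $F(t) = \operatorname{arcsinh} t - t\cdot g(t)^{-3/4}$ and show $F(t)>0$; but I expect the slick route is to compare power series. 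Both sides are even functions of $t$ that equal $1$ at $t=0$, so I would compute the Maclaurin expansions of $g(t)$ and of $\left(\operatorname{arcsinh} t / t\right)^{-4/3}$ and compare coefficients.

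The first key step is the series $\operatorname{arcsinh} t = t - \frac{t^3}{6} + \frac{3t^5}{40} - \cdots$, hence $\frac{\operatorname{arcsinh} t}{t} = 1 - \frac{t^2}{6} + \frac{3t^4}{40}-\cdots$ and then, expanding $(1+u)^{-4/3}$ with $u = -\frac{t^2}{6}+\cdots$, one gets $\left(\frac{\operatorname{arcsinh} t}{t}\right)^{-4/3} = 1 + \frac{2}{9}t^2 + c_4 t^4 + \cdots$. The second key step is $g(t) = \frac12\left((1+t)^{4/3}+(1-t)^{4/3}\right) = 1 + \binom{4/3}{2}t^2 + \binom{4/3}{4}t^4 + \cdots = 1 + \frac{2}{9}t^2 + \cdots$. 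The leading nontrivial coefficients agree (this is exactly why $4/3$, and nothing larger, is forced — it will reappear in the necessity argument), so the inequality is genuinely delicate and one must go to the $t^4$ term and beyond. Rather than chase infinitely many coefficients, the cleanest rigorous approach is to define $h(t) = \ln g(t) - \frac43\ln\frac{t}{\operatorname{arcsinh} t}$ (equivalently $\ln g - \frac43(\ln t - \ln\operatorname{arcsinh} t)$), show $h(0^+)=0$ and $h'(0^+)=0$, and then prove $h'(t)>0$ on $(0,1)$, which by monotonicity gives $h(t)>0$ and hence $N<A_{4/3}$. Computing $h'$ reduces to showing
\begin{equation*}
\frac{g'(t)}{g(t)} - \frac{4}{3}\left(\frac{1}{t} - \frac{1}{\sqrt{1+t^2}\,\operatorname{arcsinh} t}\right) > 0,
\end{equation*}
and here $g'(t) = \frac23\left((1+t)^{1/3}-(1-t)^{1/3}\right)$ is elementary.

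The main obstacle is establishing that last positivity statement rigorously on the whole interval $(0,1)$ rather than just to finitely many Taylor orders. I anticipate clearing the denominators $g(t)$, $t$, and $\sqrt{1+t^2}\,\operatorname{arcsinh} t$ (all positive) and reducing to an inequality of the form $\phi(t)\,\operatorname{arcsinh} t > \psi(t)\,\sqrt{1+t^2}$ or similar, where $\phi,\psi$ involve $(1\pm t)^{1/3}$. One then differentiates again, or expands both sides in power series and shows the resulting series has all nonnegative coefficients (a standard "Taylor coefficient positivity" argument, of the sort used in the cited Yang papers and in Costin–Toader), possibly after multiplying through by a suitable positive function to rationalize the cube roots. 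For the two-sided refinement \eqref{Ma}, the upper bound $\beta_1 = 1$ is just the inequality $N<A_{4/3}$ already proved, sharp because $N/A_{4/3}\to 1$ as $t\to0^+$; the lower bound $\alpha_1 A_{4/3} < N$ follows by showing that the ratio $N(a,b)/A_{4/3}(a,b)$, as a function of $t\in(0,1)$, is decreasing, so its infimum is the limit as $t\to1^-$, namely $\lim_{t\to1^-}\frac{t/\operatorname{arcsinh} t}{\left((1+t)^{4/3}/2\right)^{3/4}} = \frac{1/\operatorname{arcsinh} 1}{2^{1/4}} = \frac{1}{2^{1/4}\ln(1+\sqrt2)} = \alpha_1$, using $\operatorname{arcsinh} 1 = \ln(1+\sqrt2)$. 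Monotonicity of this ratio is again the crux and would be handled by the same logarithmic-derivative/series-coefficient technique, which is why I expect the bulk of the work — and the only real difficulty — to be the single monotonicity lemma underpinning both bounds.
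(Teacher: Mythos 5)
Your plan is essentially the paper's own strategy in a different parametrization: with $x=b/a\in(0,1)$ the paper studies $F_p(x)=\ln\frac{N(1,x)}{A_p(1,x)}$, which is exactly $-\tfrac34 h$ in your variable $t=\frac{a-b}{a+b}$ (your $t\to0^+$ is the paper's $x\to1^-$, your $t\to1^-$ is $x\to0^+$); like you, it obtains the necessity of $p\ge 4/3$ from the second-order expansion at $a=b$, the case $p>4/3$ from monotonicity of $r\mapsto A_r$, and the sharp constant $\alpha_1=\frac{1}{2^{1/4}\ln(\sqrt2+1)}$ as the limiting value of the ratio at the other endpoint. So there is no methodological divergence to report; the problem is completeness. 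The single statement on which the whole theorem rests --- strict monotonicity of the log-ratio on all of $(0,1)$, i.e.\ your claim $h'(t)>0$ --- is never proved: you list candidate techniques (``differentiate again'', ``series with all nonnegative coefficients''), flag this as the main obstacle, and stop. That is a genuine gap, not a routine verification: after one differentiation the sign question pits a logarithm against an algebraic expression, and it is only settled by a second differentiation together with an exact algebraic identity.

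For comparison, here is how the paper closes that gap, and why your $t$-variable makes it awkward. Differentiating $F_p$ reduces its sign to that of $f_p(x)=\ln\frac{x-1+\sqrt{2(x^2+1)}}{x+1}-\sqrt2\,\frac{x-1}{(x+1)\sqrt{x^2+1}}\,\frac{x^p+1}{x^{p-1}+1}$; differentiating $f_p$ reduces matters to the sign of an explicit generalized polynomial $g$, and at the critical value $p=4/3$ the substitution $x\mapsto x^3$ turns $3g(x^3)$ into a genuine polynomial which factors as $(x-1)^3(x+1)\left(x^8+2x^7+7x^6+9x^5+9x^4+9x^3+7x^2+2x+1\right)<0$; hence $f_{4/3}$ is decreasing, so $f_{4/3}(x)>f_{4/3}(1)=0$ and $F_{4/3}$ is increasing, and general $p\ge 4/3$ follows because the Lehmer factor $\frac{x^p+1}{x^{p-1}+1}=\mathcal{L}_{p-1}(1,x)$ is increasing in $p$. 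In your parametrization both $(1+t)^{1/3}$ and $(1-t)^{1/3}$ occur, and no single polynomial substitution rationalizes them simultaneously; the natural fix $u=\left(\frac{1-t}{1+t}\right)^{1/3}$ simply recreates the ratio variable, so the ratio parametrization is not a cosmetic choice but what makes the factorization step available. Until you supply this lemma (or an actual coefficient-positivity argument, which you have not exhibited), the sufficiency $N<A_{4/3}$ and the sharpness of $\alpha_1$ remain unproved, and even the necessity direction, though routine from the $t^2$-coefficients you computed, is only sketched.
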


\begin{theorem}
\label{Theorem 2}For $a,b>0$ with $a\neq b$, the inequality $N>A_{p}$ holds
if and only if $p\leq p_{0}=\frac{\ln 2}{\ln \ln \left( 3+2\sqrt{2}\right) }%
\approx \allowbreak 1.\,\allowbreak 222\,8$. Moreover, we have 
\begin{equation}
\allowbreak \alpha _{2}A_{p_{0}}<N<\beta _{2}\allowbreak A_{p_{0}},
\label{Mb}
\end{equation}%
where $\allowbreak \alpha _{2}=1$ and $\beta _{2}\approx 1.\,\allowbreak
013\,8$ are the best possible constants.
\end{theorem}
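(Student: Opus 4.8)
The plan is to reduce the two-sided comparison $N>A_p$ (and the companion multiplicative estimate) to a single-variable analysis via the standard homogeneity normalization. Since both $N$ and $A_p$ are symmetric and homogeneous of degree one, I would set $b=1$ and write $a>1$, or better, substitute $a=e^{2t}$, $b=e^{-2t}$ with $t>0$ so that $(a-b)/(a+b)=\tanh(2t)$ and $\operatorname{arcsinh}\tfrac{a-b}{a+b}$ becomes a clean function of $t$. In these variables $N(a,b)=\sinh(2t)/\operatorname{arcsinh}(\tanh 2t)$ and $A_p(a,b)=(\cosh(2pt))^{1/p}$. Define
\begin{equation*}
f_p(t)=\frac{1}{p}\ln\cosh(2pt)-\ln\sinh(2t)+\ln\operatorname{arcsinh}(\tanh 2t),
\end{equation*}
so that $N>A_p \iff f_p(t)<0$ for all $t>0$. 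The inequality $N>A_p$ for all admissible $a,b$ is thus equivalent to $f_p(t)<0$ on $(0,\infty)$, and the sharp exponent $p_0$ is the largest $p$ for which this holds.

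The key step is a boundary/asymptotic analysis to pin down $p_0$. Near $t=0$ one expands $f_p$ in a Taylor (Maclaurin) series: both $\ln\cosh(2pt)/p$ and $\ln\sinh(2t)-\ln(2t)$ and $\ln\operatorname{arcsinh}(\tanh 2t)-\ln(2t)$ are even, so $f_p(t)=c_2(p)t^2+c_4(p)t^4+\cdots$; requiring $f_p<0$ locally forces a sign condition on the leading nonzero coefficient, which will identify a candidate threshold. The \emph{binding} constraint, however, is the behavior as $t\to\infty$: there $\tfrac1p\ln\cosh(2pt)\sim 2t-\tfrac{\ln2}{p}$, $\ln\sinh 2t\sim 2t-\ln 2$, and $\operatorname{arcsinh}(\tanh 2t)\to\operatorname{arcsinh}(1)=\ln(1+\sqrt2)$, whence
\begin{equation*}
\lim_{t\to\infty}f_p(t)=-\frac{\ln 2}{p}+\ln 2+\ln\ln(1+\sqrt2).
\end{equation*}
For $f_p<0$ to survive in the limit we need this quantity $\le 0$, i.e. $p\le \dfrac{\ln 2}{\ln 2+\ln\ln(1+\sqrt2)}=\dfrac{\ln 2}{\ln\ln(3+2\sqrt2)}=p_0$ (using $(1+\sqrt2)^2=3+2\sqrt2$). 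So the endpoint value at infinity alone forces $p\le p_0$, giving the ``only if'' direction immediately, and simultaneously shows that $N/A_{p_0}\to 1$ as $a/b\to\infty$, so $\alpha_2=1$ is best possible.

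For the ``if'' direction it suffices, by monotonicity of $r\mapsto A_r$, to prove $f_{p_0}(t)<0$ for all $t>0$. Here I expect the main obstacle: $f_{p_0}$ vanishes at both ends ($f_{p_0}(0^+)=0$ and $f_{p_0}(\infty^-)=0$), so a crude monotonicity argument fails and one needs a genuine shape analysis of $f_{p_0}$ on $(0,\infty)$. The natural tool is the monotone form of L'Hôpital's rule (the ``monotonicity of the ratio of functions with a common zero'' technique used throughout this circle of papers): differentiate, clear denominators, and reduce $\operatorname{sgn} f_{p_0}'(t)$ to the sign of an auxiliary function $g(t)$ built from $\sinh$, $\cosh$, $\operatorname{arcsinh}$; then show $g$ changes sign exactly once (from $+$ to $-$), so that $f_{p_0}$ increases then decreases, and since it is $0$ at both endpoints it is strictly negative in between. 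Proving that single sign change may itself require another layer of L'Hôpital-type reduction or a power-series positivity argument; substitutions such as $x=\tanh 2t\in(0,1)$ or $s=\operatorname{arcsinh}(\tanh 2t)$ should make the resulting elementary function tractable. Finally, the bound $\beta_2=1+o(1)$ in \eqref{Mb}: since $N/A_{p_0}=\exp(-f_{p_0}(t))>1$ with $f_{p_0}<0$, the supremum of $N/A_{p_0}$ is attained at the interior maximum point $t^\ast$ of $-f_{p_0}$, and $\beta_2=\exp(-f_{p_0}(t^\ast))$; evaluating this numerically gives $\beta_2\approx1.0138$, and its sharpness is clear because the supremum is attained. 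This disposes of both \eqref{Mb} constants and completes Theorem~\ref{Theorem 2}.
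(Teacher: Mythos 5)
Your overall strategy is essentially the paper's, just in the variable $t$ instead of $x=b/a$: normalize by homogeneity, obtain the threshold $p_{0}$ from the boundary limit (your $t\to \infty $ limit is exactly the paper's $x\to 0^{+}$ limit of $F_{p}$, and it does give both the necessity of $p\leq p_{0}$ and the sharpness of $\alpha _{2}=1$), reduce sufficiency to $p=p_{0}$ by monotonicity of $r\mapsto A_{r}$, and read off $\beta _{2}$ numerically at the interior critical point. The genuine gap is precisely at the step you yourself flag as ``the main obstacle'': you assert, without proof, that the auxiliary function governing the sign of $f_{p_{0}}^{\prime }$ changes sign exactly once. That single claim is the entire technical content of the paper's Lemmas \ref{Lemma 2.2}--\ref{Lemma 2.4}: after writing $F_{p}^{\prime }$ as a positive factor times $f_{p}$, and $f_{p}^{\prime }$ as a positive factor times the function $g$ of (\ref{g}), the paper needs four successive differentiations (positivity of $g^{(4)}$ on $(0,1)$ for $p\in (1,4/3)$, then the sign patterns of $g^{\prime \prime \prime }$, $g^{\prime \prime }$, $g^{\prime }$), together with an indirect argument (if $g^{\prime }(x_{2})<0$ one would deduce $F_{p_{0}}<0$ on $(0,1)$, contradicting $F_{p_{0}}(0^{+})=0=F_{p_{0}}(1)$) to establish the single sign change of $g$, hence of $f_{p_{0}}$, hence the single interior extremum of $F_{p_{0}}$. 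Saying this ``may itself require another layer of L'H\^{o}pital-type reduction or a power-series positivity argument'' is a plan, not a proof; without it neither the sufficiency $N>A_{p_{0}}$ nor the existence and uniqueness of the maximizer that defines $\beta _{2}\approx 1.0138$ is established.

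There is also an orientation slip. With your convention $f_{p}=\ln A_{p}-\ln N$, you need $f_{p_{0}}<0$ with $f_{p_{0}}(0^{+})=f_{p_{0}}(+\infty )=0$, so the required shape is \emph{decreasing then increasing} (derivative changing sign from $-$ to $+$); a function that increases then decreases between two zero boundary values is strictly \emph{positive}, which would give the reverse inequality $N<A_{p_{0}}$. The error is fixable -- it corresponds to the paper's $F_{p_{0}}$, in the variable $x=b/a$, being increasing then decreasing -- but as written the sentence is self-contradictory and shows the sketch was not checked against the actual sign conventions; combined with the missing lemma above, the sufficiency half of the theorem remains unproved in your proposal.
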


\section{Lemmas}

In order to prove our main results, we need the following lemmas.

\begin{lemma}
\label{Lemma 2.1}Let $F_{p}$ be the function defined on $\left( 0,1\right) $
by 
\begin{equation}
F_{p}\left( x\right) =\ln \frac{N\left( 1,x\right) }{A_{p}\left( 1,x\right) }%
=\ln \frac{x-1}{2\ln \frac{x-1+\sqrt{2\left( x^{2}+1\right) }}{x+1}}-\frac{1%
}{p}\ln \left( \frac{x^{p}+1}{2}\right) .  \label{F_p}
\end{equation}%
Then we have 
\begin{eqnarray}
\lim_{x\rightarrow 1^{-}}\frac{F_{p}\left( x\right) }{\left( x-1\right) ^{2}}
&=&\allowbreak -\frac{1}{24}\left( 3p-4\right) ,  \label{2.1} \\
F_{p}\left( 0^{+}\right) &=&\lim_{x\rightarrow 0^{+}}F_{p}\left( x\right)
=\left\{ 
\begin{array}{lc}
\frac{1}{p}\ln 2-\ln \ln \left( 3+2\sqrt{2}\right) & \text{if }p>0, \\ 
\infty & \text{if }p\leq 0,%
\end{array}%
\right.  \label{2.2}
\end{eqnarray}%
where $F_{0}\left( x\right) :=\lim_{p\rightarrow 0}F_{p}\left( x\right) $.
\end{lemma}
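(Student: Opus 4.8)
The plan is to treat the two limits separately, since they are of quite different flavor. For the limit at $x\to 0^{+}$ in \eqref{2.2}, the key observation is that as $x\to 0^{+}$ the factor $\frac{x-1}{x+1}\to -1$, so $\func{arcsinh}\frac{x-1}{x+1}\to \func{arcsinh}(-1)=-\ln(1+\sqrt2)$, and hence $N(1,x)=\frac{x-1}{2\func{arcsinh}\frac{x-1}{x+1}}\to \frac{-1}{-2\ln(1+\sqrt2)}=\frac{1}{2\ln(1+\sqrt2)}$. Using the identity $(1+\sqrt2)^{2}=3+2\sqrt2$ this equals $\frac{1}{\ln(3+2\sqrt2)}$, so $\ln N(1,0^{+})=-\ln\ln(3+2\sqrt2)$. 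On the other side, $\frac1p\ln\frac{x^{p}+1}{2}\to \frac1p\ln\frac12=-\frac1p\ln 2$ when $p>0$, so $F_p(0^{+})=-\ln\ln(3+2\sqrt2)+\frac1p\ln 2$; when $p\le 0$ the term $\frac1p\ln\frac{x^{p}+1}{2}$ tends to $-\infty$ (for $p<0$, $x^{p}\to+\infty$ so $\frac1p\ln\frac{x^{p}+1}{2}\to -\infty$; for $p=0$ use $A_0(1,x)=\sqrt x\to 0$, so $-\ln\sqrt x\to+\infty$), giving $F_p(0^{+})=+\infty$. This part is routine once the $\func{arcsinh}$ value is identified.

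For the limit at $x\to 1^{-}$ in \eqref{2.1}, I would substitute $x=1-t$ (or $x = 1+t$ and track signs; either way the answer is even in the perturbation) and expand everything in powers of $t$ up to order $t^{2}$. Writing $N(1,x)$ in the form $N=\frac{x-1}{2\func{arcsinh} u}$ with $u=\frac{x-1}{x+1}$, one has $u=O(t)$, and $\func{arcsinh} u = u - \frac{u^{3}}{6}+\cdots$, so $\ln\frac{N(1,x)}{A}$ requires the expansions of $\ln\frac{x-1}{2\func{arcsinh} u}$ and of $\frac1p\ln\frac{x^{p}+1}{2}$ through second order. A cleaner route is to use the known fact that for a symmetric homogeneous mean $M(1,1-t)$ one has $\ln M(1,1-t)=-\frac t2+\big(c_M-\tfrac18\big)t^{2}+O(t^{3})$ for a constant $c_M$ depending only on the "second-order coefficient" of the mean; then $F_p=\ln N-\ln A_p$ has vanishing first-order term automatically and the $t^{2}$-coefficient is $c_N-c_{A_p}$. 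Carrying out the Taylor expansion, $c_{A_p}$ comes out proportional to $p$ and $c_N$ is a pure number, and the difference collapses to $-\frac{1}{24}(3p-4)\,t^{2}$, i.e. $\lim_{x\to1^{-}}F_p(x)/(x-1)^{2}=-\frac{1}{24}(3p-4)$.

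The main obstacle is bookkeeping accuracy in the $x\to1$ expansion: one must carry the series for $\ln\frac{x-1}{2\func{arcsinh}\frac{x-1}{x+1}}$ to second order in $t=x-1$ without dropping terms, and likewise expand $\frac1p\ln\frac{x^{p}+1}{2}$ (which, after dividing by $t^2$, should produce the $\frac{p}{24}$-type contribution together with the $p$-independent $\frac18$ term that cancels against the one in $\ln N$). I would double-check the final coefficient by testing $p=4/3$, where the limit must be $0$ — consistent with $N\approx A_{4/3}$ to second order near $x=1$, which is exactly the reason $4/3$ is the critical exponent — and by testing $p=1$, where $F_1(x)/(x-1)^2\to \frac{1}{24}$, reflecting $A<N$ near the diagonal. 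The remaining steps (continuity of $p\mapsto F_p(x)$, and the degenerate case $F_0:=\lim_{p\to0}F_p$) are immediate from the explicit formula \eqref{F_p}.
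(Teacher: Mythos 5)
Your proposal is correct and takes essentially the same route as the paper: the paper proves (\ref{2.1}) by a power-series expansion of $F_p$ about $x=1$ (obtaining exactly your coefficient $-\tfrac{1}{24}(3p-4)$ for the $(x-1)^2$ term) and proves (\ref{2.2}) by the direct limit computation you carry out, using $2\ln(1+\sqrt{2})=\ln(3+2\sqrt{2})$ and the sign analysis for $p\le 0$. No substantive difference to report.
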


\begin{proof}
Using power series expansion we have 
\begin{equation*}
F_{p}\left( x\right) =\allowbreak -\frac{1}{24}\left( 3p-4\right) \left(
x-1\right) ^{2}+O\left( \left( x-1\right) ^{3}\right) ,
\end{equation*}%
which yields (\ref{2.1}).

Direct limit calculation leads to (\ref{2.2}), which proves the lemma.
\end{proof}

\begin{lemma}
\label{Lemma 2.2}Let $F_{p}$ be the function defined on $\left( 0,1\right) $
by (\ref{F_p}). Then $F_{p}$ is strictly increasing on $\left( 0,1\right) $
if and only if $p\geq 4/3$ and decreasing on $\left( 0,1\right) $\ if and
only if $p\leq 1$.
\end{lemma}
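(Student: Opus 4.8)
The plan is to reduce the monotonicity of $F_{p}$ to the sign of its derivative and then to a sign analysis of an auxiliary one‑variable function. First I would substitute $x=e^{-2t}$ (or a similar change of variable symmetrizing $a,b$), so that $\operatorname{arcsinh}\frac{x-1}{x+1}$ becomes a clean expression; concretely, writing $\frac{1-x}{1+x}=\tanh t$ one has $\operatorname{arcsinh}(\tanh t)$ and $\ln N(1,x)$ turns into $\ln\sinh(2t)-\ln 2-\ln\operatorname{arcsinh}(\tanh t)$ up to an additive constant, while $\ln A_p(1,x)=\frac1p\ln\frac{e^{-2pt}+1}{2}$. Differentiating $F_p$ in $t$ gives
\begin{equation*}
F_p'(t)=2\coth(2t)-\frac{\operatorname{sech}^2 t}{\operatorname{arcsinh}(\tanh t)\sqrt{1+\tanh^2 t}}-\frac{2e^{-2pt}}{e^{-2pt}+1}\cdot(-1)\cdot(\text{sign bookkeeping}),
\end{equation*}
and the real content is to show this has the claimed sign exactly when $p\ge 4/3$ (for increase) or $p\le 1$ (for decrease). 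Rather than wrestle with this transcendental mess directly, I would isolate the $p$‑dependence: since $\frac1p\ln A_p$ enters linearly through $\frac{\partial}{\partial t}\big(-\frac1p\ln\frac{e^{-2pt}+1}{2}\big)=\frac{2}{e^{2pt}+1}$, and this quantity is itself monotone in $p$ for each fixed $t$, the sign condition on $F_p'$ becomes monotone in $p$. Hence it suffices to find the two threshold values of $p$ and check the borderline cases.

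The key steps, in order, are: (i) perform the hyperbolic substitution and record $F_p'$ as $g(t)-h_p(t)$ where $g(t)=2\coth(2t)-\dfrac{\operatorname{sech}^2 t}{\sqrt{1+\tanh^2 t}\,\operatorname{arcsinh}(\tanh t)}$ collects the $N$‑part and $h_p(t)=\dfrac{2}{e^{2pt}+1}$ the power‑mean part, with $t\in(0,\infty)$ corresponding to $x\in(0,1)$; (ii) observe $h_p$ is strictly decreasing in $p$, so the set of $p$ for which $F_p'\ge 0$ on the whole interval is an up‑set and the set for which $F_p'\le 0$ is a down‑set, reducing the problem to identifying the endpoints; (iii) use the asymptotic data from Lemma~\ref{Lemma 2.1} — namely $F_p(x)\sim-\frac{1}{24}(3p-4)(x-1)^2$ as $x\to1^-$ — to see that near $x=1$ the function decreases iff $3p-4\ge0$ and increases iff $3p-4\le0$ is \emph{false}, which pins the candidate threshold for increase at $p=4/3$; and symmetrically analyze the behaviour as $x\to0^+$ via \eqref{2.2} together with a Taylor expansion to locate $p=1$ as the threshold for decrease; (iv) for the two borderline values $p=4/3$ and $p=1$, prove the one‑variable inequalities $g(t)\ge h_{4/3}(t)$ and $g(t)\le h_{1}(t)$ respectively for all $t>0$, typically by clearing denominators, reducing to a polynomial‑times‑elementary‑function inequality, and applying a power‑series / sign‑of‑coefficients argument or the monotone L'Hôpital rule on the resulting ratio.

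The main obstacle I anticipate is step (iv): after clearing the denominators $\operatorname{arcsinh}(\tanh t)$, $\sqrt{1+\tanh^2 t}$, and $e^{2pt}+1$, one is left with an inequality between a transcendental combination of $\sinh,\cosh,\operatorname{arcsinh}$ that does not factor, and the naive power‑series coefficients need not all have the same sign. The realistic route is to introduce the function $\varphi(t)=\dfrac{\text{(numerator)}}{\text{(one well-chosen monotone factor)}}$ and show $\varphi$ is monotone by differentiating once more and iterating the "monotone form of L'Hôpital's rule" (as in the Vamanamurthy–Vuorinen / Anderson–Vamanamurthy–Vuorinen toolkit), each differentiation stripping away one transcendental term until a manifestly signed expression remains; the endpoint values are then supplied precisely by \eqref{2.1} and \eqref{2.2}. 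Checking that the limiting constants $-\frac{1}{24}(3p-4)$ and $\frac1p\ln2-\ln\ln(3+2\sqrt2)$ have the right sign at $p=4/3$ and $p=1$ respectively closes the "only if" direction, while the monotone‑ratio argument closes the "if" direction.
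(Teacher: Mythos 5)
Your overall architecture matches the paper's in spirit: reduce to the sign of the derivative, exploit monotonicity in $p$ of the power-mean contribution (your $h_p(t)=2/(e^{2pt}+1)$ plays exactly the role of the paper's Lehmer-mean monotonicity) to shrink the problem to the two borderline values, and settle necessity by endpoint asymptotics. The necessity of $p\geq 4/3$ via the expansion $F_p(x)\sim-\tfrac1{24}(3p-4)(x-1)^2$ is fine. But there are two genuine gaps. First, and most seriously, the entire mathematical content of the lemma is the sufficiency at the borderline values, and your step (iv) does not prove it: you only describe a hoped-for scheme (clear denominators, then iterate a monotone L'H\^opital argument ``until a manifestly signed expression remains'') and you yourself flag that the series coefficients need not have one sign and that nothing factors. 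Nothing in the proposal verifies that this iteration terminates in a signed expression, so the inequalities $g(t)\geq h_{4/3}(t)$ and $g(t)\leq h_1(t)$ remain unestablished. The paper closes this by a concrete computation: differentiating $f_p$ once removes the logarithm and reduces the sign of $f_p'$ to an algebraic function $g(x)$; at $p=4/3$ the substitution $x\mapsto x^3$ turns $3g(x^3)$ into a polynomial which factors as $(x-1)^3(x+1)\left(x^8+2x^7+7x^6+9x^5+9x^4+9x^3+7x^2+2x+1\right)<0$, and at $p=1$ one simply gets $g(x)=2x-2x^3>0$; monotonicity of $f_p$ plus $f_p(1)=0$ then finishes. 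Without this (or some completed substitute), your proof of the ``if'' directions is a plan, not a proof.

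Second, your necessity argument for the decreasing threshold is not correct as stated. The limit \eqref{2.2} gives $F_p(0^+)=\tfrac1p\ln 2-\ln\ln(3+2\sqrt2)$, and requiring this to be $\geq 0$ only yields $p\leq \ln 2/\ln\ln(3+2\sqrt2)\approx 1.22$, not $p\leq 1$; indeed for $1<p\leq 1.22$ both endpoint conditions are satisfied although $F_p$ is not monotone. The threshold $p=1$ is detected only by the limiting slope at $0^+$: in the paper's notation, $\lim_{x\to0^+}f_p(x)=\ln(\sqrt2-1)+\sqrt2>0$ when $p>1$ (because $\lim_{x\to0^+}\frac{x^p+1}{x^{p-1}+1}$ jumps from $1$ to $\tfrac12$ at $p=1$), so $F_p'$ is positive near $0$ and $F_p$ cannot be decreasing. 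Your phrase ``(2.2) together with a Taylor expansion'' would have to mean precisely this derivative-level analysis at $x=0^+$, which is not spelled out; as written, the endpoint-value argument you invoke cannot locate $p=1$. (A minor further caution: your hyperbolic rewriting of $\ln N(1,e^{-2t})$ is off by a non-constant term $\ln\cosh t$, so the explicit form of $g(t)$ would need to be recomputed before step (iv) is attempted.)
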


\begin{proof}
Differentiation yields%
\begin{equation}
F_{p}^{\prime }\left( x\right) =\frac{x^{p-1}+1}{x^{p}+1}\frac{1}{\left(
x-1\right) \ln \frac{x-1+\sqrt{2\left( x^{2}+1\right) }}{x+1}}\times
f_{p}\left( x\right) ,  \label{dF_p}
\end{equation}%
where 
\begin{equation}
f_{p}\left( x\right) =\ln \frac{x-1+\sqrt{2\left( x^{2}+1\right) }}{x+1}-%
\sqrt{2}\frac{x-1}{\left( x+1\right) \sqrt{x^{2}+1}}\frac{x^{p}+1}{x^{p-1}+1}
\label{f_p}
\end{equation}%
Differentiating $f_{p}\left( x\right) $ and simplifying lead to%
\begin{equation}
f_{p}^{\prime }\left( x\right) =\allowbreak \frac{\sqrt{2}\left( 1-x\right)
x^{p}}{\left( \sqrt{x^{2}+1}\right) ^{3}\left( x+1\right) ^{2}\left(
x+x^{p}\right) ^{2}}g\left( x\right) ,  \label{df_p}
\end{equation}%
where 
\begin{equation}
g\left( x\right) =\left(
x^{p+2}+x^{p+1}+2x^{p}-x^{2-p}-x^{3-p}-2x^{4-p}+\left( p-1\right)
x^{4}-x^{3}+x-p+1\right) .  \label{g}
\end{equation}%
(i) We now prove that $F_{p}$ is strictly increasing on $\left( 0,1\right) $
if and only if $p\geq 4/3$. From (\ref{dF_p}) it is seen that $\func{sgn}%
F_{p}^{\prime }\left( x\right) =\func{sgn}$ $f_{p}\left( x\right) $ for $%
x\in \left( 0,1\right) $, so it suffices to prove that $f_{p}\left( x\right)
>0$ for $x\in \left( 0,1\right) $ if and only if $p\geq 4/3$.

\textbf{Necessity}. If $f_{p}\left( x\right) >0$ for $x\in \left( 0,1\right) 
$ then there must be $\lim_{x\rightarrow 1^{-}}\left( 1-x\right)
^{-3}f_{p}\left( x\right) \geq 0$. Application of L'Hospital rule leads to 
\begin{equation*}
\lim_{x\rightarrow 1^{-}}\frac{f_{p}\left( x\right) }{\left( 1-x\right) ^{3}}%
=\lim_{x\rightarrow 1^{-}}\frac{\ln \frac{x-1+\sqrt{2\left( x^{2}+1\right) }%
}{x+1}-\sqrt{2}\frac{x-1}{\left( x+1\right) \sqrt{x^{2}+1}}\frac{x^{p}+1}{%
x^{p-1}+1}}{\left( 1-x\right) ^{3}}=\allowbreak \frac{1}{8}\left( p-\frac{4}{%
3}\right) ,
\end{equation*}%
and so we have $p\geq 4/3$.

\textbf{Sufficiency}. We now prove $f_{p}\left( x\right) >0$ for $x\in
\left( 0,1\right) $ if $p\geq 4/3$. Since the\ Lehmer mean of order $r$ of
the positive real numbers $a$ and $b$ defined as 
\begin{equation}
\mathcal{L}_{r}=\mathcal{L}_{r}\left( a,b\right) =\frac{a^{r+1}+b^{r+1}}{%
a^{r}+b^{r}}  \label{Lehmer}
\end{equation}%
(see \cite{Lehmer.36(1971)}) is increasing in its parameter on $\mathbb{R}$,
it is enough to show that $f_{p}\left( x\right) >0$ for $x\in \left(
0,1\right) $ when $p=4/3$. In this case, we have 
\begin{equation*}
g\left( x\right) =x-x^{3}+\frac{1}{3}x^{4}-x^{\frac{2}{3}}+2x^{\frac{4}{3}%
}-x^{\frac{5}{3}}+x^{\frac{7}{3}}-2x^{\frac{8}{3}}+x^{\frac{10}{3}%
}-\allowbreak \frac{1}{3},
\end{equation*}%
and therefore 
\begin{equation*}
3g\left( x^{3}\right)
=x^{12}+3x^{10}-3x^{9}-6x^{8}+3x^{7}-3x^{5}+6x^{4}+3\allowbreak
x^{3}-3x^{2}-1.
\end{equation*}%
Factoring yields that for $x\in \left( 0,1\right) $ 
\begin{equation*}
3g\left( x^{3}\right) =\left( x-1\right) ^{3}\left( x+1\right) \left(
x^{8}+2x^{7}+7x^{6}+9x^{5}+9x^{4}+9x^{3}+7x^{2}+2x+\allowbreak 1\right) <0.
\end{equation*}

It follows from (\ref{df_p}) that $f_{p}^{\prime }\left( x\right) <0$, that
is, the function $f_{p}$ is decreasing on $\left( 0,1\right) $. Hence for $%
x\in \left( 0,1\right) $ we have $f_{p}\left( x\right) >f_{p}\left( 1\right)
=0$, which proves the sufficiency.

(ii) We next prove that $F_{p}$ is strictly decreasing on $\left( 0,1\right) 
$ if and only if $p\leq 1$. Similarly, it suffices to show that $f_{p}\left(
x\right) <0$ for $x\in \left( 0,1\right) $ if and only if $p\leq 1$.

\textbf{Necessity}. If $f_{p}\left( x\right) <0$ for $x\in \left( 0,1\right) 
$ then we have 
\begin{equation*}
\lim_{x\rightarrow 0^{+}}f_{p}\left( x\right) =\left\{ 
\begin{array}{ll}
\ln \left( \sqrt{2}-1\right) +\sqrt{2} & \text{if }p>1 \\ 
\ln \left( \sqrt{2}-1\right) +\frac{\sqrt{2}}{2} & \text{if }p=1 \\ 
\ln \left( \sqrt{2}-1\right) & \text{if }p<1%
\end{array}%
\right. \leq 0,
\end{equation*}%
which yields $p\leq 1$.

\textbf{Sufficiency}. We prove $f_{p}\left( x\right) <0$ for $x\in \left(
0,1\right) $ if $p\leq 1$. As mentioned previous, the function $p\mapsto
L_{p-1}\left( 1,x\right) $ is increasing on $\mathbb{R}$, it suffices to
demonstrate $f_{p}\left( x\right) <0$ for $x\in \left( 0,1\right) $ when $%
p=1 $. In this case, we have $g\left( x\right) =\allowbreak \allowbreak
2x-2x^{3}>0$, then $f_{p}^{\prime }\left( x\right) >0$, and then for $x\in
\left( 0,1\right) $ we have $f_{p}\left( x\right) <f_{p}\left( 1\right) =0$,
which proves the sufficiency and the proof of this lemma is finished.
\end{proof}

\begin{lemma}
\label{Lemma 2.3}Let the function $g$ be defined on $\left( 0,1\right) $ by (%
\ref{g}). Then there is a unique a $x_{0}\in \left( 0,1\right) $ such that $%
g\left( x\right) <0$ for $x\in \left( 0,x_{0}\right) $ and $g\left( x\right)
>0$ for $x\in \left( x_{0},1\right) $ if $p=p_{0}=\frac{\ln 2}{\ln \ln
\left( 3+2\sqrt{2}\right) }\in \left( 122/100,4/3\right) $.
\end{lemma}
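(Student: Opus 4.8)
The plan is to exploit a hidden reciprocal symmetry of $g$ together with a Descartes--type bound on the number of its zeros. First I would record that replacing $x$ by $1/x$ in (\ref{g}) and clearing denominators produces the functional equation
\[
x^{4}g(1/x)=-g(x)\qquad(x>0),
\]
valid for \emph{every} $p$; in particular $g(1)=0$, and the zeros of $g$ in $(0,\infty)$ are interchanged by the involution $x\mapsto 1/x$, with $x=1$ the unique fixed point and with matching multiplicities (the change of variable being a diffeomorphism). Next I would collect the boundary data for $p=p_{0}$: every exponent occurring in (\ref{g}) except the constant one is positive, so $g(0^{+})=1-p_{0}<0$; and differentiating (\ref{g}) gives $g'(1)=4(3p_{0}-4)<0$ since $p_{0}<4/3$, so $g$ is strictly positive on a left neighbourhood of $1$ and $x=1$ is a simple zero of $g$.

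Second, I would bound the total number of zeros of $g$ on $(0,\infty)$. Write $g(x)=\sum_{i=1}^{10}c_{i}x^{\lambda_{i}}$ with the exponents in increasing order; for $p=p_{0}\in(1,4/3)$ these are $0<2-p_{0}<1<p_{0}<3-p_{0}<p_{0}+1<4-p_{0}<3<p_{0}+2<4$ (all distinct, and the ordering is the same on the whole interval $(1,4/3)$), and the corresponding coefficient signs are $-,-,+,+,-,+,-,-,+,+$, which display exactly five sign changes. By the Descartes rule of signs for generalized polynomials (equivalently, for exponential sums: the number of zeros in $(0,\infty)$, counted with multiplicity, does not exceed the number of sign changes in the coefficient sequence), $g$ has at most five zeros in $(0,\infty)$, counted with multiplicity. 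Combining this with the symmetry of the first step and with the fact that $x=1$ is a simple zero — the zeros in $(0,1)$ and in $(1,\infty)$ are in multiplicity-preserving bijection, and together with the simple zero at $1$ they total at most $5$ — we conclude that $g$ has at most two zeros in $(0,1)$, counted with multiplicity.

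Finally I would finish by elementary sign analysis on $(0,1)$. By continuity and $g(0^{+})<0$ together with $g>0$ just below $1$, $g$ has at least one zero in $(0,1)$. The bound ``at most two zeros with multiplicity'', combined with $g<0$ near $0$ and $g>0$ near $1$, rules out ``two simple zeros'' (this forces the sign pattern $-,+,-$, contradicting $g>0$ near $1$) and also ``one double zero'' (this leaves $g\le 0$ throughout, again contradicting $g>0$ near $1$). Hence $g$ has exactly one zero $x_{0}$ in $(0,1)$, it is simple, and $g(x)<0$ on $(0,x_{0})$, $g(x)>0$ on $(x_{0},1)$ — which is precisely the assertion. (The same argument in fact applies to every $p\in(1,4/3)$, of which $p_{0}\in(122/100,4/3)$ is a particular instance.)

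I expect the crux to be the second step. Getting the count of five sign changes right requires pinning down the ordering of the ten exponents (which is stable exactly on $(1,4/3)$, so that $p_{0}$ qualifies), and one must be prepared to invoke the refined Descartes rule for generalized polynomials: the naive estimate ``an exponential sum with $n$ terms has at most $n-1$ real zeros'' only yields at most four zeros in $(0,1)$, which is not sharp enough to force uniqueness. If one wishes to remain fully elementary, this refined estimate can be reproved in situ by the usual successive-differentiation (Rolle) argument applied to $x^{-\lambda_{1}}g(x)$ and its derivatives, but that route is considerably more laborious.
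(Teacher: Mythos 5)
Your proof is correct, and it takes a genuinely different route from the paper's. The paper establishes the lemma by brute force on derivatives: it computes $g'$, $g''$, $g'''$, $g^{(4)}$, shows $g^{(4)}>0$ on $(0,1)$ for $p\in(1,4/3)$, and then descends through the derivatives using endpoint signs; the delicate point is its Step 4, where the needed sign $g'(x_{2})>0$ is not read off from $g$ itself but obtained by a contradiction that climbs back up through $f_{p}$ and $F_{p}$ and uses the exact identity $F_{p_{0}}(0^{+})=0$, so the paper's argument is tied to the specific value $p=p_{0}$ (more precisely to $p\geq p_{0}$). You stay entirely at the level of $g$: the antisymmetry $x^{4}g(1/x)=-g(x)$ (which checks out, and also yields $g(1)=0$), the simplicity of the zero at $1$ from $g'(1)=4(3p-4)\neq 0$, the endpoint data $g(0^{+})=1-p<0$ and $g>0$ just left of $1$, and Laguerre's extension of Descartes' rule of signs to generalized polynomials with real exponents: your exponent ordering on $(1,4/3)$ and the sign sequence $-,-,+,+,-,+,-,-,+,+$ (five changes) are correct, giving at most five positive zeros with multiplicity, hence at most two in $(0,1)$ after using the involution and the simple zero at $1$, and your final case analysis correctly eliminates everything except a single simple zero with sign pattern $-$ then $+$. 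What your route buys: no fourth-derivative computation, no non-local contradiction step, and the conclusion for every $p\in(1,4/3)$ rather than only $p_{0}$ (this is consistent with the paper: for $p_{0}<p<4/3$ one gets $F_{p}$ increasing--decreasing with $F_{p}(0^{+})<0$, so no global comparison, as Theorems \ref{Theorem 1} and \ref{Theorem 2} demand). What it costs: you must cite, or reprove by the Rolle-type induction you sketch, the multiplicity-counting version of the Descartes rule for real exponents, whereas the paper's longer argument uses nothing beyond elementary calculus.
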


\begin{proof}
We prove desired result stepwise.

\textbf{Step 1}: We have $g^{\left( 4\right) }\left( x\right) >0$ for $x\in
\left( 0,1\right) $ when $p\in \left( 1,4/3\right) $.

Differentiations yield 
\begin{eqnarray}
g^{\prime }\left( x\right) &=&\left( p+2\right) x^{p+1}+\left( p+1\right)
x^{p}+2px^{p-1}+\left( p-2\right) x^{1-p}  \label{dg} \\
&&+\left( p-3\right) x^{2-p}+2\left( p-4\right) x^{3-p}+4\left( p-1\right)
x^{3}-3x^{2}+1,  \notag
\end{eqnarray}

\begin{eqnarray}
g^{\prime \prime }\left( x\right) &=&\left( p+1\right) \left( p+2\right)
x^{p}+p\left( p+1\right) x^{p-1}+2p\left( p-1\right) x^{p-2}  \label{ddg} \\
&&-\left( p-1\right) \left( p-2\right) x^{-p}-\left( p-2\right) \left(
p-3\right) x^{1-p}  \notag \\
&&-2\left( p-3\right) \left( p-4\right) x^{2-p}+12\left( p-1\right) x^{2}-6x,
\notag
\end{eqnarray}

\begin{eqnarray}
g^{\prime \prime \prime }\left( x\right) &=&p\left( p+1\right) \left(
p+2\right) x^{p-1}+p\left( p-1\right) \left( p+1\right) x^{p-2}  \label{dddg}
\\
&&+2p\left( p-1\right) \left( p-2\right) x^{p-3}+p\left( p-1\right) \left(
p-2\right) x^{-p-1}  \notag \\
&&+\left( p-1\right) \left( p-2\right) \left( p-3\right) x^{-p}+2\left(
p-2\right) \left( p-3\right) \left( p-4\right) x^{1-p}  \notag \\
&&+24\left( p-1\right) x-6,  \notag
\end{eqnarray}

\begin{eqnarray}
\frac{g^{\left( 4\right) }\left( x\right) }{p-1} &=&p\left( p+1\right)
\left( p+2\right) x^{p-2}  \label{ddddg} \\
&&+p\left( p+1\right) \left( p-2\right) x^{p-3}+2p\left( p-2\right) \left(
p-3\right) x^{p-4}  \notag \\
&&-p\left( p+1\right) \left( p-2\right) x^{-p-2}-p\left( p-2\right) \left(
p-3\right) x^{-p-1}  \notag \\
&&-2\left( p-2\right) \left( p-3\right) \left( p-4\right) x^{-p}+24  \notag
\\
&:&=I_{1}+I_{2}+I_{3}+I_{4},  \notag
\end{eqnarray}%
where 
\begin{eqnarray*}
I_{1} &=&p\left( p+1\right) \left( p+2\right) x^{p-2}>0, \\
I_{2} &=&p\left( p+1\right) \left( p-2\right) x^{p-3}+2p\left( p-2\right)
\left( p-3\right) x^{p-4} \\
&=&p\left( 2-p\right) x^{p-4}\left( 2\left( 3-p\right) -\left( p+1\right)
x\right) \\
&>&p\left( 2-p\right) x^{p-3}\left( 2\left( 3-p\right) -\left( p+1\right)
\right) =p\left( 2-p\right) x^{p-3}\left( 5-3p\right) >0, \\
I_{3} &=&-p\left( p+1\right) \left( p-2\right) x^{-p-2}-p\left( p-2\right)
\left( p-3\right) x^{-p-1} \\
&=&p\left( 2-p\right) x^{-p-2}\left( \left( p+1\right) -\left( 3-p\right)
x\right) \\
&>&p\left( 2-p\right) x^{-p-2}\left( \left( p+1\right) -\left( 3-p\right)
\right) =2p\left( 2-p\right) x^{-p-2}\left( p-1\right) >0, \\
I_{4} &=&2\left( 2-p\right) \left( 3-p\right) \left( 4-p\right) x^{-p}+24>0
\end{eqnarray*}%
Hence, $g^{\left( 4\right) }\left( x\right) >0$ for $x\in \left( 0,1\right) $
when $p\in \left( 1,4/3\right) $.

\textbf{Step 2: }There is unique $x_{3}\in \left( 0,1\right) $ such that $%
g^{\prime \prime \prime }\left( x\right) <0$ for $x\in \left( 0,x_{3}\right) 
$ and $g^{\prime \prime \prime }\left( x\right) >0$ for $x\in \left(
x_{3},1\right) $ when $p\in \left( 122/100,4/3\right) $.

Since $g^{\left( 4\right) }\left( x\right) >0$ for $x\in \left( 0,1\right) $
when $p\in \left( 1,4/3\right) $, to prove this step, it suffices to verify
that $g^{\prime \prime \prime }\left( 0^{+}\right) <0$ and $g^{\prime \prime
\prime }\left( 1\right) >0$. Simple computation yields 
\begin{eqnarray*}
\func{sgn}g^{\prime \prime \prime }\left( 0^{+}\right) &=&\func{sgn}\left(
p\left( p-1\right) \left( p-2\right) \right) <0, \\
g^{\prime \prime \prime }\left( 1\right) &=&\allowbreak
8p^{3}-30p^{2}+94p-84:=h\left( p\right) >0,
\end{eqnarray*}%
where the last inequality holds is due to 
\begin{equation*}
h^{\prime }\left( p\right) =\allowbreak 24p^{2}-60p+94=\frac{3}{2}\left(
4p-5\right) ^{2}+\allowbreak \frac{113}{2}>0
\end{equation*}%
with 
\begin{equation*}
h\left( \frac{122}{100}\right) =\allowbreak \frac{17\,337}{31\,250}>0\text{
\ and }\allowbreak h\left( \frac{4}{3}\right) =\frac{188}{27}>0.
\end{equation*}

\textbf{Step 3}: There is a unique $x_{2}\in \left( 0,x_{3}\right) $ such
that $g^{\prime \prime }\left( x\right) >0$ for $x\in \left( 0,x_{2}\right) $
and $g^{\prime \prime }\left( x\right) <0$ for $x\in \left( x_{2},1\right) $
when $p\in \left( 122/100,4/3\right) $.

By Step 2 with 
\begin{eqnarray*}
\func{sgn}g^{\prime \prime }\left( 0^{+}\right) &=&\func{sgn}\left( -\left(
p-1\right) \left( p-2\right) \right) >0, \\
g^{\prime \prime }\left( 1\right) &=&\allowbreak 12\left( 3p-4\right) <0,
\end{eqnarray*}%
we see that $g^{\prime \prime }\left( x\right) <g^{\prime \prime }\left(
1\right) <0$ for $x\in \left( x_{3},1\right) $ but $g^{\prime \prime }\left(
0^{+}\right) >0$, which completes this step.

\textbf{Step 4}: There are two $x_{11}\in \left( 0,x_{2}\right) ,x_{12}\in
\left( x_{2},1\right) $ such that $g^{\prime }\left( x\right) <0$ for $x\in
\left( 0,x_{11}\right) \cup \left( x_{12},1\right) $ and $g^{\prime }\left(
x\right) >0$ for $x\in \left( x_{11},x_{12}\right) $ when $p=p_{0}=\frac{\ln
2}{\ln \ln \left( 3+2\sqrt{2}\right) }\in \left( \frac{122}{100},\frac{4}{3}%
\right) $.

Since $p=p_{0}=\frac{\ln 2}{\ln \ln \left( 3+2\sqrt{2}\right) }\in \left( 
\frac{122}{100},\frac{4}{3}\right) $, according to Step 3 and note that 
\begin{eqnarray*}
g^{\prime }\left( 0^{+}\right) &=&\func{sgn}\left( p-2\right) <0, \\
g^{\prime }\left( 1\right) &=&\allowbreak 4\left( 3p-4\right) <0,
\end{eqnarray*}%
in order to prove this step, it is enough to verify that $g^{\prime }\left(
x_{2}\right) >0$.

In fact, if $g^{\prime }\left( x_{2}\right) <0$ then $g^{\prime }\left(
x\right) <g^{\prime }\left( x_{2}\right) <0$ for $x\in \left( 0,x_{2}\right) 
$ and $g^{\prime }\left( x\right) <g^{\prime }\left( x_{2}\right) <0$ for $%
x\in \left( x_{2},1\right) $, and then $g^{\prime }\left( x\right) <0$ for $%
x\in \left( 0,1\right) $. It follows that $g\left( x\right) >g\left(
1\right) =0$, which in combination (\ref{df_p}) yields $f_{p}^{\prime
}\left( x\right) >0$. Therefore, $f_{p}\left( x\right) <f_{p}\left( 1\right)
=0$, which implies from (\ref{dF_p}) that $F_{p}^{\prime }\left( x\right) <0$%
. Then for $x\in \left( 0,1\right) $ 
\begin{equation*}
0=F_{p}\left( 0^{+}\right) >F_{p}\left( x\right) >F_{p}\left( 1\right) =0
\end{equation*}%
if $p=p_{0}=\frac{\ln 2}{\ln \ln \left( 3+2\sqrt{2}\right) }\in \left( \frac{%
122}{100},\frac{4}{3}\right) $, which is clearly a contradiction. Hence
there must be $g^{\prime }\left( x_{2}\right) >0$, which completes the Step
4.

\textbf{Step 5}: There is a unique a $x_{0}\in \left( x_{11},x_{12}\right) $
such that $g\left( x\right) <0$ for $x\in \left( 0,x_{0}\right) $ and $%
g\left( x\right) >0$ for $x\in \left( x_{0},1\right) $ if $p=p_{0}=\frac{\ln
2}{\ln \ln \left( 3+2\sqrt{2}\right) }\in \left( 122/100,4/3\right) $.

From Step 5 and notice that 
\begin{equation*}
g\left( 0^{+}\right) =1-p<0\text{, \ \ \ }g\left( 1^{-}\right) =0,
\end{equation*}%
we have the following variance table of $g\left( x\right) $:%
\begin{equation*}
\begin{tabular}{|c|c|c|c|c|c|c|c|}
\hline
$x$ & $0^{+}$ & $\left( 0,x_{11}\right) $ & $x_{11}$ & $\left(
x_{11},x_{12}\right) $ & $x_{12}$ & $\left( x_{12},1\right) $ & $1$ \\ \hline
$g^{\prime }\left( x\right) $ & $-$ & $-$ & $0$ & $+$ & $0$ & $-$ & $-$ \\ 
\hline
$g\left( x\right) $ & $-$ & $\searrow $ & $-$ & $\nearrow $ & $+$ & $%
\searrow $ & $0$ \\ \hline
\end{tabular}%
\end{equation*}%
where 
\begin{equation*}
g\left( x_{11}\right) <g\left( 0^{+}\right) =1-p<0\text{ \ \ and \ }g\left(
x_{12}\right) >g\left( 1\right) =0.
\end{equation*}%
Thus the step follows.
\end{proof}

\begin{lemma}
\label{Lemma 2.4}Let the function $f_{p}$ be defined on $\left( 0,1\right) $
by (\ref{f_p}), where $p=p_{0}=\frac{\ln 2}{\ln \ln \left( 3+2\sqrt{2}%
\right) }$. Then there is a unique $\tilde{x}_{0}\in \left( 0,x_{0}\right) $
to satisfy $f_{p}\left( \tilde{x}_{0}\right) =0$ such that $f_{p}\left(
x\right) >0$ for $x\in \left( 0,\tilde{x}_{0}\right) $ and $f_{p}\left(
x\right) <0$ for $x\in \left( \tilde{x}_{0},1\right) $.
\end{lemma}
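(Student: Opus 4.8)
The plan is to leverage the sign analysis of $g$ already established in Lemma~\ref{Lemma 2.3} and translate it, via the representation \eqref{df_p}, into a complete description of the monotonicity of $f_{p}$, and then combine that with the known boundary values of $f_{p}$ at $0^{+}$ and $1$. Recall from \eqref{df_p} that for $x\in(0,1)$ the sign of $f_{p}^{\prime}(x)$ equals the sign of $g(x)$, since the prefactor $\sqrt{2}(1-x)x^{p}\bigl(\sqrt{x^{2}+1}\bigr)^{-3}(x+1)^{-2}(x+x^{p})^{-2}$ is strictly positive on $(0,1)$. By Lemma~\ref{Lemma 2.3}, with $p=p_{0}=\tfrac{\ln 2}{\ln\ln(3+2\sqrt2)}\in(122/100,4/3)$, there is a unique $x_{0}\in(0,1)$ with $g<0$ on $(0,x_{0})$ and $g>0$ on $(x_{0},1)$. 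Hence $f_{p}$ is strictly decreasing on $(0,x_{0})$ and strictly increasing on $(x_{0},1)$.

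Next I would pin down the boundary behaviour of $f_{p}$. At the right endpoint, $f_{p}(1^{-})=0$ directly from \eqref{f_p} (the logarithm term tends to $\ln 1 = 0$ and the second term vanishes because of the factor $x-1$). At the left endpoint, from the Necessity part of the proof of Lemma~\ref{Lemma 2.2} we have, since $p_{0}>1$,
\begin{equation*}
f_{p}(0^{+})=\ln\bigl(\sqrt2-1\bigr)+\sqrt2 >0,
\end{equation*}
as is checked numerically ($\ln(\sqrt2-1)\approx-0.8814$, so the sum is $\approx0.533>0$). Thus $f_{p}(0^{+})>0=f_{p}(1)$ while $f_{p}$ first decreases then increases, with its minimum attained at $x_{0}$.

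From the shape just described, $f_{p}$ is strictly decreasing from the positive value $f_{p}(0^{+})$ down to $f_{p}(x_{0})$, and then strictly increasing from $f_{p}(x_{0})$ up to $f_{p}(1)=0$. Since $f_{p}$ is strictly increasing on $(x_{0},1)$ and reaches $0$ at the right end, we get $f_{p}(x)<0$ for all $x\in(x_{0},1)$, and in particular $f_{p}(x_{0})<0$. Therefore on the decreasing branch $(0,x_{0})$, $f_{p}$ passes strictly monotonically from $f_{p}(0^{+})>0$ to $f_{p}(x_{0})<0$, so by the intermediate value theorem there is a unique $\tilde x_{0}\in(0,x_{0})$ with $f_{p}(\tilde x_{0})=0$; moreover $f_{p}>0$ on $(0,\tilde x_{0})$ and $f_{p}<0$ on $(\tilde x_{0},x_{0})$. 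Combining with $f_{p}<0$ on $(x_{0},1)$ gives $f_{p}(x)>0$ on $(0,\tilde x_{0})$ and $f_{p}(x)<0$ on $(\tilde x_{0},1)$, as claimed.

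The only genuinely delicate point is confirming the sign $f_{p}(0^{+})>0$ and, more importantly, that the minimum value $f_{p}(x_{0})$ is strictly negative rather than merely nonpositive — but this is forced: $f_{p}$ is \emph{strictly} increasing on $(x_{0},1)$ with $f_{p}(1)=0$, so $f_{p}(x_{0})<f_{p}(1)=0$ with no further computation needed. Everything else is a bookkeeping exercise with the intermediate value theorem applied to the strictly monotone branch $(0,x_{0})$. I expect no real obstacle here; the substantive work was already done in Lemma~\ref{Lemma 2.3}.
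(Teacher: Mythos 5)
Your proposal is correct and follows essentially the same route as the paper: both use Lemma \ref{Lemma 2.3} together with \eqref{df_p} to see that $f_{p_0}$ is strictly decreasing on $(0,x_0)$ and strictly increasing on $(x_0,1)$, then combine $f_{p_0}(1)=0$ (so $f_{p_0}<0$ on $(x_0,1)$) with $f_{p_0}(0^{+})=\ln(\sqrt{2}-1)+\sqrt{2}>0$ and the intermediate value theorem on the decreasing branch to obtain the unique zero $\tilde{x}_0\in(0,x_0)$ and the stated sign pattern. Your write-up simply makes explicit the details (positivity of the prefactor in \eqref{df_p}, strictness of $f_{p_0}(x_0)<0$) that the paper leaves implicit.
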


\begin{proof}
Due to (\ref{df_p}), it is deduced that $f_{p}$ is decreasing on $\left(
0,x_{0}\right) $\ and increasing on $\left( x_{0},1\right) $, then $%
f_{p}\left( x\right) <f_{p}\left( 1\right) =0$ for $x\in \left(
x_{0},1\right) $ but $f_{p}\left( 0^{+}\right) =\ln \left( \sqrt{2}-1\right)
+\sqrt{2}>0$. This indicates that there is a unique $\tilde{x}_{0}\in \left(
0,x_{0}\right) $ to satisfy $f_{p}\left( \tilde{x}_{0}\right) =0$ such that $%
f_{p}\left( x\right) >0$ for $x\in \left( 0,\tilde{x}_{0}\right) $ and $%
f_{p}\left( x\right) <0$ for $x\in \left( \tilde{x}_{0},1\right) $.
\end{proof}

\section{Proofs of Main Results}

Based on the lemmas in the above section, we can easily proved our main
results.

\begin{proof}[Proof of Theorem \protect\ref{Theorem 1}]
By symmetry, we assume that $a>b>0$. Then inequality $N<A_{p}$ is equivalent
to%
\begin{equation}
\ln N\left( 1,x\right) -\ln A_{p}\left( 1,x\right) =F_{p}\left( x\right) <0,
\label{3.1}
\end{equation}%
where $x=b/a\in \left( 0,1\right) $. Now we prove the inequality (\ref{3.1})
holds for all $x\in \left( 0,1\right) $ if and only if $p\geq 4/3$.

\textbf{Necessity}. If inequality (\ref{3.1}) holds, then by Lemma \ref%
{Lemma 2.1} we have%
\begin{equation*}
\left\{ 
\begin{array}{l}
\lim_{x\rightarrow 1^{-}}\frac{F_{p}\left( x\right) }{\left( x-1\right) ^{2}}%
=-\frac{1}{24}\left( 3p-4\right) \leq 0, \\ 
\lim_{x\rightarrow 0^{+}}F_{p}\left( x\right) =\frac{1}{p}\ln 2-\ln \ln
\left( 3+2\sqrt{2}\right) \leq 0\text{ if }p>0,%
\end{array}%
\right.
\end{equation*}%
which yields $p\geq 4/3$.

\textbf{Sufficiency. }Suppose that $p\geq 4/3$. It follows from Lemma \ref%
{Lemma 2.2} that $F_{p}\left( x\right) <F_{p}\left( 1\right) =0$ for $x\in
\left( 0,1\right) $, which proves the sufficiency.

Using the monotonicity of the function $x\mapsto F_{4/3}\left( x\right) $ on 
$\left( 0,1\right) $, we have 
\begin{equation*}
\ln \tfrac{1}{\sqrt[4]{2}\ln \left( \sqrt{2}+1\right) }=F_{4/3}\left(
0^{+}\right) <F_{54/3}\left( x\right) <F_{4/3}\left( 1^{-}\right) =0,
\end{equation*}%
which implies (\ref{Mb}).

Thus the proof of Theorem \ref{Theorem 1} is finished.
\end{proof}

\begin{proof}[Proof of Theorem \protect\ref{Theorem 2}]
Clearly, the inequality $N>A_{p}$ is equivalent to%
\begin{equation}
\ln N\left( 1,x\right) -\ln A_{p}\left( 1,x\right) =F_{p}\left( x\right) >0,
\label{3.2}
\end{equation}%
where $x=b/a\in \left( 0,1\right) $. Now we show that the inequality (\ref%
{3.2}) holds for all $x\in \left( 0,1\right) $ if and only if $p\leq \frac{%
\ln 2}{\ln \ln \left( 3+2\sqrt{2}\right) }$.

\textbf{Necessity}. The condition $p\leq \frac{\ln 2}{\ln \ln \left( 3+2%
\sqrt{2}\right) }$ is necessary. Indeed, if inequality (\ref{3.2}) holds,
then we have%
\begin{equation*}
\left\{ 
\begin{array}{l}
\lim_{x\rightarrow 1^{-}}\frac{F_{p}\left( x\right) }{\left( x-1\right) ^{2}}%
=-\frac{1}{24}\left( 3p-4\right) \geq 0, \\ 
\lim_{x\rightarrow 0^{+}}F_{p}\left( x\right) =\frac{1}{p}\ln 2-\ln \ln
\left( 3+2\sqrt{2}\right) \geq 0\text{ if }p>0%
\end{array}%
\right.
\end{equation*}%
or%
\begin{equation*}
\left\{ 
\begin{array}{l}
\lim_{x\rightarrow 1^{-}}\frac{F_{p}\left( x\right) }{\left( x-1\right) ^{2}}%
=-\frac{1}{24}\left( 3p-4\right) \geq 0, \\ 
\lim_{x\rightarrow 0^{+}}F_{p}\left( x\right) =\infty \text{ if }p\leq 0.%
\end{array}%
\right.
\end{equation*}%
Solving the above inequalities leads to $p\leq \frac{\ln 2}{\ln \ln \left(
3+2\sqrt{2}\right) }$.

\textbf{Sufficiency. }The condition $p\leq \frac{\ln 2}{\ln \ln \left( 3+2%
\sqrt{2}\right) }$ is also sufficient. Since the function $r\mapsto
A_{r}\left( 1,x\right) $ is increasing, so the function $p\mapsto
F_{p}\left( x\right) $ is decreasing, thus it is suffices to show that $%
F_{p}\left( x\right) >0$ for all $x\in \left( 0,1\right) $ if $p=p_{0}=\frac{%
\ln 2}{\ln \ln \left( 3+2\sqrt{2}\right) }$.

Lemma \ref{Lemma 2.4} reveals that for $p=p_{0}$ there is a unique $\tilde{x}%
_{0}$ to satisfy 
\begin{equation}
f_{p_{0}}\left( x\right) =\ln \frac{x-1+\sqrt{2\left( x^{2}+1\right) }}{x+1}-%
\sqrt{2}\frac{x-1}{\left( x+1\right) \sqrt{x^{2}+1}}\frac{x^{p_{0}}+1}{%
x^{p_{0}-1}+1}=0  \label{3.3}
\end{equation}%
such that the function $x\mapsto F_{p}\left( x\right) $ is strictly
increasing on $\left( 0,\tilde{x}_{0}\right) $ and strictly decreasing on $%
\left( \tilde{x}_{0},1\right) $. It is acquired that for $p_{0}=\frac{\ln 2}{%
\ln \ln \left( 3+2\sqrt{2}\right) }$ 
\begin{eqnarray*}
0 &=&F_{p_{0}}\left( 0^{+}\right) <F_{p_{0}}\left( x\right) \leq
F_{p_{0}}\left( \tilde{x}_{0}\right) \\
0 &=&F_{p_{0}}\left( 1\right) <F_{p_{0}}\left( x_{3}\right) \leq
F_{p_{0}}\left( \tilde{x}_{0}\right) ,
\end{eqnarray*}%
which leads to%
\begin{equation*}
A_{p_{0}}\left( 1,x\right) <N\left( 1,x\right) <\left( \exp F_{p}\left( 
\tilde{x}_{0}\right) \right) A_{p_{0}}\left( 1,x\right) .
\end{equation*}%
Solving the equation (\ref{3.3}) by mathematical computation software we
find that $\tilde{x}_{0}\in \left( 0.15806215485976,0.15806215485977\right) $%
, and then 
\begin{equation*}
\beta _{2}=\exp F_{p}\left( \tilde{x}_{0}\right) \approx \allowbreak
1.\,\allowbreak 013\,8,
\end{equation*}%
which proves the sufficiency and inequalities (\ref{Mb}).
\end{proof}

\section{Corollaries}

From the proof of Lemma \ref{Lemma 2.2}, it is seen that $f_{p}\left(
x\right) >0$ for $x\in \left( 0,1\right) $ if and only if $p\geq 4/3$, which
implies that the inequality 
\begin{equation*}
N\left( 1,x\right) =\frac{x-1}{2\ln \frac{x-1+\sqrt{2\left( x^{2}+1\right) }%
}{x+1}}>\frac{x+1}{2}\sqrt{\frac{x^{2}+1}{2}}\frac{x^{p-1}+1}{x^{p}+1}
\end{equation*}%
holds if and only $p\geq 4/3$. In a similar way, the inequality 
\begin{equation*}
N\left( 1,x\right) <\frac{x+1}{2}\sqrt{\frac{x^{2}+1}{2}}\frac{x^{p-1}+1}{%
x^{p}+1}
\end{equation*}%
is valid if and only if $p\leq 1$. The results can be restated as a
corollary.

\begin{corollary}
The inequalities%
\begin{equation}
\frac{AA_{2}}{\mathcal{L}_{p-1}}<N<\frac{AA_{2}}{\mathcal{L}_{q-1}}
\label{4.0}
\end{equation}%
holds if and only if $p\geq 4/3$ and $q\leq 1$, where $\mathcal{L}_{r}$ is
the Lehmer mean defined by (\ref{Lehmer}).
\end{corollary}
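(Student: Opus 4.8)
The plan is to reduce the double inequality (\ref{4.0}) to a statement about the sign of the auxiliary function $f_{p}$ introduced in (\ref{f_p}), and then to read off the answer from the proof of Lemma \ref{Lemma 2.2}. By the symmetry and homogeneity of all the means involved, we may assume $a>b>0$ and set $x=b/a\in(0,1)$; then (\ref{4.0}) is equivalent to
\[
\frac{A(1,x)A_{2}(1,x)}{\mathcal{L}_{p-1}(1,x)}<N(1,x)<\frac{A(1,x)A_{2}(1,x)}{\mathcal{L}_{q-1}(1,x)}\qquad\text{for all }x\in(0,1).
\]
The first step is to record the normalized values $A(1,x)=\tfrac{1+x}{2}$, $A_{2}(1,x)=\sqrt{\tfrac{1+x^{2}}{2}}$ and, from (\ref{Lehmer}), $\mathcal{L}_{r-1}(1,x)=\dfrac{x^{r}+1}{x^{r-1}+1}$, so that
\[
\frac{A(1,x)A_{2}(1,x)}{\mathcal{L}_{r-1}(1,x)}=\frac{1+x}{2}\sqrt{\frac{1+x^{2}}{2}}\cdot\frac{x^{r-1}+1}{x^{r}+1}.
\]

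The second step is the sign bookkeeping that converts the inequalities between $N(1,x)$ and $\dfrac{A(1,x)A_{2}(1,x)}{\mathcal{L}_{r-1}(1,x)}$ into the corresponding ones between $f_{r}(x)$ and $0$. Using the second representation of $N$ in (\ref{N-S mean}), set $u=u(x)=\ln\dfrac{x-1+\sqrt{2(x^{2}+1)}}{x+1}$, so that $N(1,x)=\dfrac{x-1}{2u}$. For $x\in(0,1)$ one has $\sqrt{2(x^{2}+1)}<2$, hence $x-1+\sqrt{2(x^{2}+1)}<x+1$ and $u<0$; since also $x-1<0$, both sides of the inequality are positive. Multiplying through by $2u<0$ (which reverses the inequality) and then dividing by the positive factor $(x+1)\sqrt{(x^{2}+1)/2}\cdot\dfrac{x^{r-1}+1}{x^{r}+1}$ shows that $N(1,x)>\dfrac{A(1,x)A_{2}(1,x)}{\mathcal{L}_{r-1}(1,x)}$ is equivalent to
\[
u-\sqrt{2}\,\frac{x-1}{(x+1)\sqrt{x^{2}+1}}\cdot\frac{x^{r}+1}{x^{r-1}+1}>0,
\]
which is precisely $f_{r}(x)>0$ with $f_{r}$ as in (\ref{f_p}); in the same way $N(1,x)<\dfrac{A(1,x)A_{2}(1,x)}{\mathcal{L}_{q-1}(1,x)}$ is equivalent to $f_{q}(x)<0$.

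The third step is to invoke the proof of Lemma \ref{Lemma 2.2}, which establishes (via the necessity/sufficiency arguments in parts (i) and (ii), since there it is the sign of $f_{p}$ itself that is characterized) that $f_{r}(x)>0$ for all $x\in(0,1)$ if and only if $r\ge 4/3$, and $f_{q}(x)<0$ for all $x\in(0,1)$ if and only if $q\le 1$. Combining the two equivalences established in the second step gives that (\ref{4.0}) holds for all $a,b>0$ with $a\ne b$ precisely when $p\ge 4/3$ and $q\le 1$, which is the assertion. I do not expect a genuine obstacle here: the only point requiring care is tracking the direction of the inequality while clearing the two negative factors $x-1$ and $u$, and then recognizing that the rearranged expression is exactly $f_{r}$; all the substantive work — the monotonicity of $f_{r}$ and the polynomial factorizations behind it — is already contained in Lemma \ref{Lemma 2.2}.
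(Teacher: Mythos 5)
Your proposal is correct and follows essentially the same route as the paper: Section 4 likewise observes that the two inequalities are equivalent to $f_{p}(x)>0$ and $f_{q}(x)<0$ on $(0,1)$ and then cites the necessity/sufficiency arguments in the proof of Lemma \ref{Lemma 2.2} for the characterizations $p\geq 4/3$ and $q\leq 1$. Your only addition is to spell out the sign bookkeeping (the two negative factors $x-1$ and $\ln\frac{x-1+\sqrt{2(x^{2}+1)}}{x+1}$), which the paper leaves implicit.
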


Using the monotonicity of the function defined on $\left( 0,1\right) $ by 
\begin{equation*}
F_{p}\left( x\right) =\ln \frac{N\left( 1,x\right) }{A_{p}\left( 1,x\right) }
\end{equation*}%
given in Lemma \ref{Lemma 2.2}, we can obtain a Fan Ky type inequality but
omit the further details of the proof.

\begin{corollary}
Let $a_{1},a_{2},b_{1},b_{2}>0$ with $a_{1}/b_{1}<a_{2}/b_{2}<1$. Then the
following Fan Ky type inequality%
\begin{equation*}
\frac{N\left( a_{1},b_{1}\right) }{N\left( a_{2},b_{2}\right) }<\frac{%
A_{p}\left( a_{1},b_{1}\right) }{A_{p}\left( a_{2},b_{2}\right) }
\end{equation*}%
holds if $p\geq 4/3$. It is reversed if \thinspace $p\leq 1$.
\end{corollary}

\end{document}